\numberwithin{equation}{section}
\numberwithin{figure}{section}
\theoremstyle{plain}
\newtheorem{thm}{Theorem}
  \theoremstyle{definition}
  \newtheorem{defn}[thm]{Definition}
  \theoremstyle{plain}
  \newtheorem{lem}[thm]{Lemma}
  \theoremstyle{plain}
  \newtheorem{prop}[thm]{Proposition}
\begin{document}

\title{{\large Random interlacements on Galton-Watson Trees}}

\author{Martin Tassy\\
\emph{\small }\\
{\scriptsize ENS cachan, antenne de bretagne}\\
{\scriptsize }\\
{\scriptsize June 2, 2010}}

\address{Martin Tassy, ENS Cachan, Antenne de Bretagne, Avenue Robert Schumann,
35170 Bruz, France}

\address{email: martin.tassy@ens-cachan.org}

\thanks{Martin Tassy wishes to thank Ji\v{r}� \v{C}ern� for scientific guidance
as well as the ETH Zurich, where this research was conducted, for
its hospitality and Professor A.S. Sznitman who gave him this opportunity
to work in Zurich.}
\begin{abstract}
We study the critical parameter $u^{*}$ of random interlacements
percolation (introduced by A.S Sznitman in \cite{Szn09}) on a Galton-Watson
tree conditioned on the non-extinction event. Starting from the previous
work of A. Teixeira in \cite{Tei09b}, we show that, for a given law
of a Galton-Watson tree, the value of this parameter is a.s. constant
and non-trivial. We also characterize this value as the solution of
a certain equation.
\end{abstract}
\maketitle

\section{introduction}

The aim of this note is to the study random interlacements model on
a Galton-Watson tree. We will mainly be interested in the critical
parameter of the model. In particular we want to understand whether
this parameter is non trivial (that is different from $0$ and $\infty$),
whether it is random and how it depends on the law of the Galton-Watson
tree. Our main theorem answers all of these questions and even goes
further by characterizing the critical parameter as the solution of
a certain equation.

The random interlacements model was recently introduced on $\mathbb{Z}^{d}$,
$d\geq3,$ by A.S. Sznitman in \cite{Szn09} and generalised to arbitrary
transient graphs by A. Teixeira in \cite{Tei09b}. It is a special
dependent site-percolation model where the set $\mathcal{I}$ of occupied
vertices on a transient graph $\left(G,\mathcal{E}\right)$ is constructed
as the trace left on $G$ by a Poisson point process on the space
of double infinite trajectories modulo time shift. The density of
the set $\mathcal{I}$ is determined by a parameter $u>0$ which comes
as a multiplicative parameter of the intensity measure of the Poisson
point process. In this paper, we will not need the complete construction
of the random interlacements percolation. For our purposes it will
be sufficient to know that the law $Q_{u}^{G}$ of the vacant set
$\mathcal{V}=G\setminus\mathcal{I}$ of the random interlacements
at level $u$ is characterized by \begin{equation}
Q_{u}^{G}\left[K\subset\mathcal{V}\right]=e^{-u\mbox{cap}_{G}\left(K\right)},K\subset G\mbox{ finite},\label{eq:formulecap}\end{equation}
where $\mbox{cap}_{G}\left(K\right)$ is the capacity of $K$ in $G$
(see Section 2 for definition). In addition to this formula we will
need the description of the distribution of the vacant cluster containing
a given vertex in the case when $G$ is a tree given in \cite{Tei09b}
which we state in Theorem \ref{thm:augusto perc-1} below. 

The critical parameter $u_{G}^{*}$ of random interlacements on $G$
is defined as:\begin{equation}
u_{G}^{*}=\inf_{u\in\mathbb{R}^{+}}\left\{ u\,:\, Q_{u}^{G}\mbox{-a.s. all connected components of }\mathcal{V}\mbox{ are finite}\right\} .\label{eq:blabla}\end{equation}

In this article we take the graph $G$ to be a Galton-Watson rooted
tree $T$ defined on a probability space $\left(\Omega,\mathcal{A},\mathbb{P}\right)$,
conditioned on non-extinction. We denote by $\varnothing$ the root
of the tree, $\left(\rho_{i}\right)_{i\geq0}$ the offspring distribution
of the non-conditioned Galton-Watson process, $f$ its generating
function and $q$ is the probability of the extinction event. $\bar{\mathbb{P}}$
stands for the conditional law of the Galton-Watson tree on the non-extinction
event and $\bar{\mathbb{E}}$ for the corresponding conditional expectation.
We assume that $T$ is supercritical that is \begin{equation}
\sum_{i=0}^{\infty}i\rho_{i}>1\tag{\ensuremath{A0}}.\label{eq:a0}\end{equation}
If \eqref{eq:a0} is satisfied, then $\bar{\mathbb{P}}$ is well defined
and $T$ is $\bar{\mathbb{P}}$-a.s. transient, see for example Proposition
3.5 and Corollary 5.10 of \cite{LyonsPer}. Thus the random interlacements
are defined on $T$ and in particular $Q_{u}^{T}$ is $\bar{\mathbb{P}}$-a.s.
well-defined.

To state our principal theorem we introduce $\widetilde{T}$ the sub-tree
of $T$ composed of the vertices which have infinite descendence.
We recall the Harris decomposition (cf. Proposition 5.23 in \cite{LyonsPer}):
under $\bar{\mathbb{P}}$, $\widetilde{T}$ is a Galton-Watson tree
with generating function $\widetilde{f}$ given by\begin{equation}
\widetilde{f}\left(s\right)=\frac{f\left(q+\left(1-q\right)s\right)}{1-q}.\label{eq:ftilde}\end{equation}

\begin{thm}
\label{thm:main}Let $T$ be a Galton Watson tree with a law satisfying
\eqref{eq:a0}. Then there exists a non-random constant $u^{*}\in\left(0,\infty\right)$
such that \begin{equation}
\mbox{\ensuremath{u_{T}^{*}=u^{*}}\ensuremath{\,,\,\bar{\mathbb{P}}\mbox{-a.s.}}}\label{eq:b1}\end{equation}
Moreover if we denote by $\mathcal{L}_{\chi}\left(u\right)=\bar{\mathbb{E}}\left[e^{-u\mbox{cap}_{T}\left(\left\{ \varnothing\right\} \right)}\right]$
the annealed probability that the root is vacant at level $u$ (cf
\ref{eq:formulecap}), then $u^{*}$ is the only solution on $\left(0,\infty\right)$
of the equation\begin{equation}
\mbox{ \ensuremath{\left(\widetilde{f}^{-1}\right)'\left(\mathcal{L}_{\chi}\left(u\right)\right)=1}.}\label{eq:b2}\end{equation}

\end{thm}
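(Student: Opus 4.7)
The plan is to identify, under the annealed measure $\bar{\mathbb{P}}\otimes Q_u^T$, the vacant cluster of the root as a Galton-Watson process on the reduced tree $\widetilde{T}$, and to read off $u^*$ from its mean-offspring equation.

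\emph{Reduction to $\widetilde{T}$.} Any infinite vacant cluster through $\varnothing$ must contain an infinite ray, and every infinite ray of $T$ lies entirely in $\widetilde{T}$. Hence survival of $\mathcal{C}(\varnothing)$ under $Q_u^T$ is governed by its restriction $\mathcal{C}(\varnothing)\cap\widetilde{T}$ to the reduced tree. By the Harris decomposition \eqref{eq:ftilde}, under $\bar{\mathbb{P}}$ the tree $\widetilde{T}$ is itself Galton-Watson with generating function $\widetilde{f}$ and no finite lines of descendance.

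\emph{Branching structure and main obstacle.} I would then invoke Theorem \ref{thm:augusto perc-1}, which provides a quenched branching description of $\mathcal{C}(\varnothing)$ in terms of the capacities along $T$. Combined with the branching property of $\widetilde{T}$ at the children of the root and the single-vertex vacancy probability $\mathcal{L}_\chi(u)$, this should exhibit $\mathcal{C}(\varnothing)\cap\widetilde{T}$ as a genuine Galton-Watson process under the annealed measure, with mean offspring
\begin{equation*}
m(u)\;=\;\bigl[(\widetilde{f}^{-1})'(\mathcal{L}_\chi(u))\bigr]^{-1}.
\end{equation*}
Carrying out this annealing cleanly---in particular, translating the capacity-based description of Teixeira into an offspring generating function expressible via $\widetilde{f}$, and then separating the tree-randomness from the interlacement-randomness---is in my view the chief technical obstacle of the proof.

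\emph{Critical equation, non-triviality, uniqueness.} The classical Galton-Watson dichotomy yields $u_T^*=\sup\{u:m(u)>1\}$, which immediately rewrites as \eqref{eq:b2}. For existence and uniqueness of $u^*\in(0,\infty)$, note that $u\mapsto\mathcal{L}_\chi(u)$ is continuous and strictly decreasing from $1$ (at $u=0^+$) to $0$ (as $u\to\infty$) by dominated convergence and positivity of $\mathrm{cap}_T(\{\varnothing\})$, while $(\widetilde{f}^{-1})'$ is continuous and strictly decreasing on $(0,1)$ with boundary values $(\widetilde{f}^{-1})'(1)=1/f'(1)<1$ (by \eqref{eq:a0}) and $(\widetilde{f}^{-1})'(0)=1/f'(q)>1$ (since $f'(q)<1$ strictly in the supercritical regime). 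Therefore $u\mapsto(\widetilde{f}^{-1})'(\mathcal{L}_\chi(u))$ is continuous, strictly increasing, and crosses the level $1$ at a unique $u^*\in(0,\infty)$.

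\emph{Almost-sure constancy.} Finally, $u_T^*$ is insensitive to grafting or pruning finite subtrees of $T$ (these alter neither the set of infinite rays nor the asymptotic structure of capacities), so it is measurable with respect to the tail $\sigma$-algebra of the i.i.d.\ offspring variables generating $T$; Kolmogorov's zero-one law then forces $u_T^*$ to equal a deterministic constant $\bar{\mathbb{P}}$-a.s., which by the previous step is the unique solution $u^*$ of \eqref{eq:b2}.
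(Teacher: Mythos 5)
Your high-level picture is right in outline (a critical ``mean-offspring'' equation, and the monotonicity analysis showing $u\mapsto(\widetilde{f}^{-1})'(\mathcal{L}_{\chi}(u))$ increases from $1/f'(1)<1$ to $1/f'(q)>1$, which matches the paper's final step), but the two load-bearing steps are both left as gaps, and in one case the asserted mechanism is not the one that actually works. First, the claim that $\mathcal{C}_{\varnothing}\cap\widetilde{T}$ is ``a genuine Galton--Watson process under the annealed measure'' with mean $m(u)$ is exactly the point the paper warns cannot be done naively: the vacancy probabilities $e^{-uh_{T}^{\varnothing}(z)}$ of siblings are correlated through the common subtree structure, so generations are not i.i.d.\ under annealing. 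What the paper does instead is introduce a second quantity $\gamma(T_{x})=P_{x}^{T_{x}'}[\widetilde{H}_{\hat{x}}=\infty]$ alongside $\chi(T)=\mathrm{cap}_{T}(\varnothing)=\sum_{x}\gamma(T_{x})$, prove the transfer identity $\mathcal{L}_{\chi}(u)=\widetilde{f}(\mathcal{L}_{\gamma}(u))$, and derive a recursion on the annealed depth probabilities of the form $\mathcal{L}_{\chi}(u)-r_{n+1}^{u}=\widetilde{f}(\mathcal{L}_{\gamma}(u)-r_{n}^{u})$. This is \emph{not} the Galton--Watson recursion $1-r_{n+1}=\widetilde{f}(1-r_{n})$ (the two sides carry different Laplace transforms), and the criticality condition $\widetilde{f}'(\mathcal{L}_{\gamma}(u))=1$ comes out of a concavity/fixed-point analysis of $x\mapsto\mathcal{L}_{\chi}(u)-\widetilde{f}(\mathcal{L}_{\gamma}(u)-x)$, not from a branching-process dichotomy. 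Without something playing the role of $\gamma$ you cannot decouple the tree randomness from the interlacement randomness, and a first-moment criterion alone would in any case only give you the subcritical direction; the supercritical direction needs the fixed-point (or a second-moment) argument.

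Second, the almost-sure constancy step does not follow from Kolmogorov's zero-one law as you state it. The offspring variables are indexed by the vertices of a random tree, there is no canonical tail $\sigma$-algebra for which ``insensitivity to pruning finite subtrees'' yields tail-measurability of $u_{T}^{*}$, and in any case one needs a zero-one law under the conditioned measure $\bar{\mathbb{P}}$. The correct tool is the inherited-property zero-one law for Galton--Watson trees (if $\mathcal{P}$ passes to all descendant trees and holds for all finite trees, then $\bar{\mathbb{P}}[T\text{ has }\mathcal{P}]\in\{0,1\}$), applied to $\mathcal{P}_{u}=\{Q_{u}^{T}[|\mathcal{C}_{\varnothing}|=\infty]=0\}$. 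Verifying that $\mathcal{P}_{u}$ is inherited is itself nontrivial: one must show $Q_{u}^{T_{x}}[|\mathcal{C}_{x}|=\infty]>0\Rightarrow Q_{u}^{T}[|\mathcal{C}_{\varnothing}|=\infty]>0$, which the paper does by checking $h_{T}^{x}(z)=h_{T_{x}}^{x}(z)=h_{T}^{\varnothing}(z)$ on $T_{x}\setminus\{x\}$ and invoking Teixeira's Theorem \ref{thm:augusto perc-1} to compare the cluster laws, together with $Q_{u}^{T}[\varnothing,x\in\mathcal{V}]>0$. Finally, note that the zero-one law and the annealed computation must be combined: $u^{*}$ is found as the threshold for positivity of the annealed survival probability $r^{u}$, and the zero-one law is what converts ``positive with positive $\bar{\mathbb{P}}$-probability'' into ``positive $\bar{\mathbb{P}}$-a.s.'', identifying $u_{T}^{*}$ with $u^{*}$ for a.e.\ tree. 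As written, your proposal names the right destination but leaves both bridges unbuilt.
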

The main difficulty in proving this theorem is the dependence present
in the model. More precisely, for any $x\in T$, the probability that
$x\in\mathcal{V}$ is given by\[
Q_{u}^{T}\left[x\in\mathcal{V}\right]=e^{-u\mbox{cap}_{T}\left(x\right)}.\]
The capacity $\mbox{cap}_{T}\left(x\right)$ depends on the whole
tree $T$. It is thus not possible to construct the component of the
vacant set containing the root of a given tree as a sequence of independent
generations, which is a key property when proving an analogous statement
for Bernoulli percolation on a Galton-Watson tree.

However, it turns out that despite this dependence, it is possible
to construct a recurrence relation under the annealed measure $\bar{\mathbb{P}}\otimes Q_{u}^{T}$
for a well chosen quantity related to the size of the cluster at a
given point (see \eqref{eq:reccant}). This is done in section 4.
Using this recurrence relation, it is possible to find an annealed
critical parameter $u^{*}$ and show that it is non-trivial. In section
3 we prove that $u_{T}^{*}$ is $\bar{\mathbb{P}}-$a.s. constant
and thus that $u_{T}^{*}=u^{*}$ for $\bar{\mathbb{P}}-$a.e. tree.
Section 2 introduces some preliminary definitions and recall some
useful results for random interlacements on trees.

\section{Definitions and preliminary results}

Let us introduce some notations first. For a given rooted tree $T$
and a vertex $x\in T\setminus\left\{ \varnothing\right\} $, we write
$\hat{x}$ for the closest ancestor of $x$ in $T$, $\left|x\right|$
for its distance to the root, $Z_{x}$ for the number of children
of $x$ and $\mbox{deg}_{T}\left(x\right)$ for its degree. We denote
by $T_{x}$ the sub-tree of $T$ containing $x$ and all its descendants.
If $T$ is a tree with root $\varnothing$, for every child $x$ of
$\varnothing$ we will say that $T_{x}$ is a descendant tree of $T$.
For any infinite rooted tree $T$, we denote by $\widetilde{Z}_{x}$
the number of children of $x$ in $\widetilde{T}$. 

For a tree $T$ and a vertex $x$ of $T$, we denote by $P_{x}^{T}$
the law of a simple discrete time random walk $\left(X_{n}\right)_{n\geq0}$
started at $x$. For every set $K$ in $T$, we use $\widetilde{H}_{K}$
to denote the hitting time of the set $K$ defined by\begin{equation}
\widetilde{H}_{K}=\inf_{n\geq1}\left\{ n\,:\, X_{n}\in K\right\} .\end{equation}
We write $e_{K}^{T}$ for the equilibrium measure of $K$ in $T$
and $\mbox{cap}_{T}\left(K\right)$ for its total mass, also called
capacity of the set $K$:\begin{equation}
e_{K}^{T}\left(x\right)=\mbox{deg}_{T}\left(x\right)P_{x}^{T}\left[\widetilde{H}_{K}=\infty\right]1_{x\in K},\end{equation}
\begin{equation}
\mbox{cap}_{T}\left(K\right)=\sum_{x\in T}e_{K}^{T}\left(x\right)=\sum_{x\in K}\mbox{deg}_{T}\left(x\right)P_{x}^{T}\left[\widetilde{H}_{K}=\infty\right].\label{eq:capequa}\end{equation}

Then, according to Corollary 3.2 of \cite{Tei09b}, the definition
\eqref{eq:blabla} of the critical parameter $u_{T}^{*}$ is equivalent
to \begin{equation}
u_{T}^{*}=\inf_{u\in\mathbb{R}^{+}}\left\{ u:Q_{u}^{T}\left[|\mathcal{C}_{\varnothing}|=\infty\right]>0\right\} .\end{equation}

Finally we recall Theorem 5.1 of \cite{Tei09b} which identifies the
law of the vacant cluster $\mathcal{C}_{x}$ on a fixes tree $T$
with the law of the vacant set left by inhomogeneous Bernoulli site
percolation. This theorem also allows us to compare random interlacements
on $T$ and random interlacements on its descendant trees.
\begin{thm}[Theorem 5.1 of \cite{Tei09b}]
\label{thm:augusto perc-1}Let $T$ be a transient rooted tree with
locally bounded degree. For every vertex $x\in T$ we consider a function
$h_{\textrm{T}}^{x}:T_{x}\rightarrow\left[0,1\right]$ given by:\begin{equation}
h_{\textrm{T}}^{x}\left(z\right)=\mbox{deg}_{T}\left(z\right)P_{z}^{T_{x}}\left[\widetilde{H}_{\left\{ z,\hat{z}\right\} }=\infty\right]P_{z}^{T_{x}}\left[\widetilde{H}_{\hat{z}}=\infty\right]1_{z\neq x}\label{eq:formulef}\end{equation}
and $h_{\textrm{T}}^{x}\left(x\right)=0.$ Conditionally on $\left\{ x\in\mathcal{V}\right\} $,
$\mathcal{C}_{x}\cap T_{x}$ has the same law under $Q_{u}^{T}$ as
the open cluster of $x$ in an inhomogeneous Bernoulli site-percolation
on $T_{x}$ where every site $z\in T_{x}$ is open independently with
probability \begin{equation}
p_{u}\left(z\right)=\exp\left(-uh_{T}^{x}\left(z\right)\right).\label{eq:probaaugusto}\end{equation}

\end{thm}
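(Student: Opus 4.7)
My plan is to reduce the theorem to a capacity identity and then prove that identity by induction. By \eqref{eq:formulecap} the joint law of $\mathcal{V}$ on finite sets is encoded in $Q_u^T[K\subset\mathcal{V}]=e^{-u\,\mathrm{cap}_T(K)}$, and an inclusion--exclusion on the external boundary of a connected subtree shows that the conditional law of $\mathcal{C}_x\cap T_x$ given $\{x\in\mathcal{V}\}$ is already determined by the family $\{Q_u^T[K\subset\mathcal{V}\mid x\in\mathcal{V}]\}$ with $K$ ranging over finite connected subtrees of $T_x$ containing $x$. The analogous conditional probability under Bernoulli$(p_u)$ site percolation on $T_x$ equals $\prod_{z\in K\setminus\{x\}}p_u(z)=\exp\bigl(-u\sum_{z\in K\setminus\{x\}}h_T^x(z)\bigr)$ (note $p_u(x)=1$ since $h_T^x(x)=0$), so the theorem reduces to the capacity identity
\[
\mathrm{cap}_T(K)-\mathrm{cap}_T(\{x\})=\sum_{z\in K\setminus\{x\}}h_T^x(z)\qquad(\star)
\]
for every such $K$.

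I would prove $(\star)$ by induction on $|K|$, the base case $K=\{x\}$ being trivial. For the inductive step, attach a vertex $y\in T_x\setminus K$ whose parent $\hat y$ lies in $K$ and aim at $\mathrm{cap}_T(K\cup\{y\})-\mathrm{cap}_T(K)=h_T^x(y)$. Two tree-specific observations sharpen the difference: first, every path in $T$ from a vertex $z\in K$ to $y$ must cross $\hat y\in K$, so for $z\in K\setminus\{\hat y\}$ the event $\{\widetilde H_K=\infty\}$ automatically implies $\{\widetilde H_y=\infty\}$ and that $z$-summand is unchanged; second, because $x\in K$ is the only gateway between $T_x$ and $T\setminus T_x$, any walk started in $T_x$ and avoiding $K$ stays in $T_x$, so the relevant $P^T$-probabilities coincide with their $P^{T_x}$-counterparts. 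Combining these reductions with the strong Markov property at $\widetilde H_y$ in the $\hat y$-term rewrites the increment as
\[
\deg_T(y)P_y^{T_x}[\widetilde H_{\{y,\hat y\}}=\infty]\;-\;\deg_T(\hat y)P_{\hat y}^T[\widetilde H_y<\widetilde H_K]\cdot P_y^{T_x}[\widetilde H_{\hat y}=\infty].
\]

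The main obstacle, and heart of the proof, is to match this expression with $h_T^x(y)=\deg_T(y)P_y^{T_x}[\widetilde H_{\{y,\hat y\}}=\infty]\,P_y^{T_x}[\widetilde H_{\hat y}=\infty]$. Writing $\alpha=P_y^{T_x}[\widetilde H_{\{y,\hat y\}}=\infty]$ and $\beta=P_y^{T_x}[\widetilde H_{\hat y}=\infty]$, the required identity becomes $\deg_T(\hat y)P_{\hat y}^T[\widetilde H_y<\widetilde H_K]\cdot\beta=\deg_T(y)\alpha(1-\beta)$. A renewal decomposition of the walk at $y$ gives $\alpha(1-\beta)/\beta=P_y^{T_x}[\widetilde H_{\hat y}<\widetilde H_y]=1/\deg_T(y)$, since in a tree the walk from $y$ can reach $\hat y$ before returning to $y$ only by stepping directly to $\hat y$. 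Symmetrically, connectedness of $K$ together with the remark that every child of $\hat y$ outside $K$ has its entire subtree disjoint from $K$ forces any walk from $\hat y$ avoiding $K$ and reaching $y$ to take its very first step to $y$, yielding $P_{\hat y}^T[\widetilde H_y<\widetilde H_K]=1/\deg_T(\hat y)$ (the borderline case $\hat y=x$ needs the extra remark that a walk stepping up to $\hat x$ can only return to $T_x$ through $x\in K$). Both sides of the target identity therefore equal $\beta$, closing the induction and hence proving the theorem.
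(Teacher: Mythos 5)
This statement is quoted from Theorem 5.1 of \cite{Tei09b} and the paper offers no proof of it at all, so there is no internal argument to measure yours against; judged on its own, your proof is correct and essentially complete. The reduction is sound: both $\mathcal{C}_x\cap T_x$ (given $\{x\in\mathcal{V}\}$) and the Bernoulli open cluster are a.s.\ connected subtrees of the locally finite tree $T_x$ containing $x$, so their laws are determined by the probabilities of $\{K\subset\cdot\}$ over finite connected $K\ni x$ (inclusion--exclusion recovers the finite-dimensional marginals, and passing to the connected hull handles general finite sets), and the theorem is therefore equivalent to your telescoping capacity identity $(\star)$, since $p_u(x)=1$. The induction also checks out: when the leaf $y$ is grafted onto $K$, the only equilibrium weight of $K$ that changes is that of $\hat{y}$ (every path from $K\setminus\{\hat{y}\}$ to $y$ crosses $\hat{y}$), the strong Markov property at $\widetilde{H}_y$ factors that correction as $P_{\hat{y}}^{T}\left[\widetilde{H}_y<\widetilde{H}_K\right]P_y\left[\widetilde{H}_K=\infty\right]$, the tree structure identifies $\{\widetilde{H}_K=\infty\}$ from $y$ with $\{\widetilde{H}_{\hat{y}}=\infty\}$ and $\{\widetilde{H}_{K\cup\{y\}}=\infty\}$ with $\{\widetilde{H}_{\{y,\hat{y}\}}=\infty\}$ (all confined to $T_x$ because $x\in K$ is the unique gateway), and the two ``direct first step'' computations give $P_{\hat{y}}^{T}\left[\widetilde{H}_y<\widetilde{H}_K\right]=1/\deg_T(\hat{y})$ and $P_y\left[\widetilde{H}_{\hat{y}}<\widetilde{H}_y\right]=1/\deg_T(y)$. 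Together with the excursion decomposition $\alpha(1-\beta)=\beta\, P_y\left[\widetilde{H}_{\hat{y}}<\widetilde{H}_y\right]$, both sides of the increment identity reduce to $\beta=P_y^{T_x}\left[\widetilde{H}_{\hat{y}}=\infty\right]$, as you claim. In short, you have supplied a self-contained proof of an imported result, by explicitly computing capacities of connected subsets of a tree --- the same mechanism that underlies Teixeira's original argument; the only places where a referee might ask for more detail are the inclusion--exclusion reduction and the verification that $\deg_{T_x}=\deg_T$ on $T_x\setminus\{x\}$, both of which are routine.
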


\section{$\bar{\mathbb{P}}$-a.s. constancy of $u^{*}$}

In this section we prove that for a given Galton-Watson tree $T$
satisfying \eqref{eq:a0} the critical parameter $u_{T}^{*}$ is $\bar{\mathbb{P}}$-a.s
constant. We will use Theorem \ref{thm:augusto perc-1} to prove a
zero-one law for the event $\left\{ Q_{u}^{T}\left[|\mathcal{C}_{\varnothing}|=\infty\right]=0\right\} $.
The proof of this zero-one law is based on the following definition
and lemma which we learnt in \cite{LyonsPer}. We present here its
proof for sake of completeness.
\begin{defn}
\label{lem:hereditary}We say that a property $\mathcal{P}$ of a
tree is inherited if the two following conditions are satisfied:\begin{equation}
\mbox{\ensuremath{T\,}has \ensuremath{\mathcal{P}\,}\ensuremath{\Rightarrow}\ all descendents of \ensuremath{T}trees have \ensuremath{\mathcal{P}}.}\label{eq:condition3.2}\end{equation}
\begin{equation}
\mbox{All finite trees have \ensuremath{\mathcal{P}}.}\end{equation}

The zero-one law for Galton-Watson trees associated to such properties
is: \end{defn}
\begin{lem}
If $\mathcal{P}$ is an inherited property then, for every Galton-Watson
tree process $T$ satisfying \eqref{eq:a0}, $\bar{\mathbb{P}}\left[T\mbox{ has }\mathcal{P}\right]\in\left\{ 0,1\right\} $.\end{lem}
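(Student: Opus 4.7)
The plan is to establish the zero-one law under the unconditioned measure $\mathbb{P}$ and then transfer it to $\bar{\mathbb{P}}$ by dividing by $\mathbb{P}[\text{non-extinction}] = 1-q$. Set $\pi = \mathbb{P}[T \text{ does not have } \mathcal{P}]$. A trivial observation will be crucial at the end: since every finite tree has $\mathcal{P}$, the event $\{T\text{ does not have }\mathcal{P}\}$ is contained in the non-extinction event, so $\pi \leq 1 - q$.

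The heart of the argument is a fixed-point-type inequality for $\pi$ obtained from the heredity condition \eqref{eq:condition3.2}. Conditionally on $\{Z_\varnothing = k\}$, the subtrees $T_1,\dots,T_k$ rooted at the children of $\varnothing$ are i.i.d.\ copies of $T$ under $\mathbb{P}$. Heredity forces the inclusion $\{T \text{ has }\mathcal{P}\}\subseteq \bigcap_{i=1}^k\{T_i \text{ has }\mathcal{P}\}$, so taking complements and averaging over $k$ against the offspring distribution yields
\[
\pi \;\geq\; \sum_{k\geq 0}\rho_k\bigl(1-(1-\pi)^k\bigr) \;=\; 1 - f(1-\pi),
\]
equivalently $f(1-\pi)\geq 1-\pi$.

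The final step is to identify where on $[0,1]$ the generating function $f$ lies weakly above the diagonal. Convexity of $f$, together with $f(1)=1$ and $f'(1)=\sum_i i\rho_i >1$ from \eqref{eq:a0}, implies that the fixed points of $f$ in $[0,1]$ are exactly $q$ and $1$, and that $f(s)\geq s$ if and only if $s\in[0,q]\cup\{1\}$. Hence $1-\pi\in[0,q]\cup\{1\}$, i.e.\ $\pi\in\{0\}\cup[1-q,1]$. Combined with the upper bound $\pi\leq 1-q$, this leaves only the two possibilities $\pi = 0$ or $\pi = 1-q$, which after dividing by $1-q>0$ translate into $\bar{\mathbb{P}}[T\text{ has }\mathcal{P}]=1$ or $0$ respectively.

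I do not expect a serious obstacle. The subtlety worth noting is that heredity gives only a one-sided inclusion of events, and hence only the inequality $f(1-\pi)\geq 1-\pi$ rather than an equation; the trivial cap $\pi\leq 1-q$ is what makes this inequality rigid enough to force the dichotomy. The remaining ingredients --- convexity of $f$ and the characterization of its fixed points via \eqref{eq:a0} --- are standard facts about supercritical Galton-Watson processes.
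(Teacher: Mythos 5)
Your proof is correct and follows essentially the same route as the paper: condition on the first generation, use heredity plus the i.i.d.\ structure of the subtrees to get the one-sided inequality $f(1-\pi)\geq 1-\pi$ (the paper writes it as $\mathbb{P}[T\in E]\leq f(\mathbb{P}[T\in E])$), invoke convexity of $f$ and its fixed points $q$ and $1$ to confine the probability to $[0,q]\cup\{1\}$, and use the fact that all finite trees have $\mathcal{P}$ to pin down the dichotomy before conditioning on non-extinction. The only difference is cosmetic (working with the complement probability $\pi$), so there is nothing further to add.
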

\begin{proof}
Let $E$ be the set of trees that have the property $\mathcal{P}$
and $Z_{\varnothing}$ the number of children of the root. Using condition
\eqref{eq:condition3.2} we can write\begin{eqnarray}
\mathbb{P}\left[T\in E\right] & = & \mathbb{E}\left[\mathbb{P}\left[T\in E|Z_{\varnothing}\right]\right]\label{eq:TdansE}\\
 & \leq & \mathbb{E}\left[\mathbb{P}\left[\left\{ \forall x:\left|x\right|=1,T_{x}\in E\right\} |Z_{\varnothing}\right]\right].\nonumber \end{eqnarray}
Conditionally on $Z_{\varnothing}$, the random trees $\left(T_{x}\right)_{x:\left|x\right|=1}$
are independent and have the same law as $T$. Hence the last inequality
is equivalent to \begin{eqnarray}
\mathbb{P}\left[T\in E\right] & \leq & \mathbb{E}\left[\left(\mathbb{P}\left[T\in E\right]\right)^{Z_{\varnothing}}\right]\label{eq:3.7}\\
 & \leq & f\left(\mathbb{P}\left[T\in E\right]\right).\nonumber \end{eqnarray}
By assumption \eqref{eq:a0} we know that $\rho_{i}>0$ for some $i\geq2$.
Therefore $f$ is strictly convex on $\left(0,1\right)$ with $f\left(q\right)=q$
and $f\left(1\right)=1$. Hence from \eqref{eq:3.7} we have $\mathbb{P}\left[T\in E\right]\in\left[0,q\right]\cup\left\{ 1\right\} $.
Since all finite trees have $\mathcal{P}$, and $\mathbb{P}\left[T\mbox{ is finite}\right]=q$,
we can deduce that $\mathbb{P}\left[T\in E\right]\in\left\{ q,1\right\} $
and consequently \begin{equation}
\bar{\mathbb{P}}\left[T\in E\right]=\frac{\mathbb{P}\left[\left\{ T\in E\right\} \cap\left\{ \left|T\right|=\infty\right\} \right]}{\mathbb{P}\left[\left\{ \left|T\right|=\infty\right\} \right]}\in\left\{ 0,1\right\} \end{equation}
which finishes the proof of the lemma.
\end{proof}
$\,$
\begin{proof}[Proof of \eqref{eq:b1}]
 To prove that $u_{T}^{*}$ is $\ensuremath{\bar{\mathbb{P}}\mbox{-a.s}}$
constant we show first that, for a tree $T$ with root $\varnothing$,
the property $\mathcal{P}_{u}$ defined by \begin{equation}
\mbox{\ensuremath{T}\mbox{ has} \ensuremath{\mathcal{P}_{u}}iff \ensuremath{T}is finite or \ensuremath{Q_{u}^{T}\left[\left|\mathcal{C}_{\varnothing}\right|=\infty\right]=0},}\end{equation}
 is inherited. Since every finite tree has $\mathcal{P}_{u}$ by definition,
we just have to prove the statement \begin{equation}
\left\{ \exists x\in T,\left|x\right|=1:T_{x}\mbox{ has not }\mathcal{P}_{u}\right\} \Rightarrow\left\{ T\mbox{ has not }\mathcal{P}_{u}\right\} .\end{equation}
Let $x$ be a child of the root such that $\left|T_{x}\right|=\infty$
and $T_{x}$ has not $\mathcal{P}_{u}$ which can also be written
$Q_{u}^{T_{x}}\left[\left|\mathcal{C}_{x}\right|=\infty\right]>0$.
Since $T_{x}\subset T$, $T$ is not finite. We will show that $Q_{u}^{T}\left[\left|\mathcal{C}_{\varnothing}\right|=\infty\right]>0$.

From the formula \eqref{eq:formulef} it follows that for every $z\in T_{x}\setminus\left\{ x\right\} $,
$h_{T}^{x}\left(z\right)=h_{T_{x}}^{x}\left(z\right)$. Using Theorem
\ref{thm:augusto perc-1} this means that, conditionally on $\left\{ x\in\mathcal{V}\right\} $,
the law of the cluster $\mathcal{C}_{x}\cap T_{x}$ under $Q_{u}^{T}$
and the law of the cluster $\mathcal{C}_{x}$ under $Q_{u}^{T_{x}}$
are both the same. In particular we have \begin{equation}
Q_{u}^{T}\left[|\mathcal{C}_{x}\cap T_{x}|=\infty|x\in\mathcal{V}\right]=Q_{u}^{T_{x}}\left[\left|\mathcal{C}_{x}\right|=\infty|x\in\mathcal{V}\right].\label{eq:3.10}\end{equation}
Moreover, we see from \eqref{eq:formulef} that $h_{T}^{x}\left(z\right)=h_{T}^{\varnothing}\left(z\right)$
for every $z\in T_{x}\setminus\left\{ x\right\} $. Applying Theorem
\ref{thm:augusto perc-1} to the clusters $\mathcal{C}_{\varnothing}$
and $\mathcal{C}_{x}$, we see that the law of $\mathcal{C}_{\varnothing}\cap T_{x}$
under $Q_{u}^{T}\left[.|\varnothing,x\in\mathcal{V}\right]$ is the
same as the law of $\mathcal{C}_{x}\cap T_{x}$ under $Q_{u}^{T}\left[.|x\in\mathcal{V}\right]$.
In particular we have \begin{equation}
Q_{u}^{T}\left[|\mathcal{C}_{\varnothing}\cap T_{x}|=\infty|\varnothing,x\in\mathcal{V}\right]=Q_{u}^{T}\left[|\mathcal{C}_{x}\cap T_{x}|=\infty|x\in\mathcal{V}\right].\label{eq:3.11}\end{equation}
 Since on $\left\{ \varnothing,x\in\mathcal{V}\right\} $, $\varnothing$
and $x$ are in the same open cluster, we can rewrite \eqref{eq:3.10}
and \eqref{eq:3.11} as \begin{eqnarray}
Q_{u}^{T}\left[|\mathcal{C}_{x}\cap T_{x}|=\infty|\varnothing,x\in\mathcal{V}\right] & = & Q_{u}^{T}\left[|\mathcal{C}_{\varnothing}\cap T_{x}|=\infty|\varnothing,x\in\mathcal{V}\right]\\
 & = & Q_{u}^{T}\left[|\mathcal{C}_{x}\cap T_{x}|=\infty|x\in\mathcal{V}\right]\nonumber \\
 & = & Q_{u}^{T_{x}}\left[\left|\mathcal{C}_{x}\right|=\infty|x\in\mathcal{V}\right]\nonumber \\
 & > & 0,\nonumber \end{eqnarray}
by hypothesis. Finally, since \begin{eqnarray}
Q_{u}^{T}\left[\varnothing,x\in\mathcal{V}\right] & = & e^{-u\mbox{cap}_{T}\left(\left\{ \varnothing,x\right\} \right)}>0,\end{eqnarray}
 this yields \begin{eqnarray}
0 & < & Q_{u}^{T}\left[|\mathcal{C}_{\varnothing}\cap T_{x}|=\infty|\varnothing,x\in\mathcal{V}\right]Q_{u}^{T}\left[\varnothing,x\in\mathcal{V}\right]\leq Q_{u}^{T}\left[\left|\mathcal{C}_{\varnothing}^{u}\right|=\infty\right]\end{eqnarray}
 which finishes the proof that $\mathcal{P}_{u}$ is inherited.

We can now apply Theorem \ref{lem:hereditary} and deduce that $\mathbb{\bar{P}}\left(T\mbox{ has }\mathcal{P}_{u}\right)\in\left\{ 0,1\right\} $.
Thus for every $s\in\mathbb{Q}^{+}$, there exists a set $A_{s}\subset\Omega$
such that $\bar{\mathbb{P}}\left[A_{s}\right]=1$ and $1_{Q_{s}^{T}\left[|\mathcal{C}_{\varnothing}|=\infty\right]>0}$
is constant on $A_{s}$. This yields \begin{equation}
T\rightarrow\inf_{s\in\mathbb{Q}^{+}}\left\{ u:Q_{s}^{T}\left[|\mathcal{C}_{\varnothing}|=\infty\right]>0\right\} \end{equation}
is constant on $A=\cap_{s\in\mathbb{Q}^{+}}A_{s}$ with $\bar{\mathbb{P}}\left[A\right]=1$.
But, since $u\mapsto Q_{u}^{T}\left[|\mathcal{C}_{\varnothing}|=\infty\right]$
is decreasing, we also have: \begin{eqnarray}
u_{*}^{T} & = & \inf_{u\in\mathbb{R}^{+}}\left\{ u:Q_{u}^{T}\left[|\mathcal{C}_{\varnothing}|=\infty\right]>0\right\} \\
 & = & \inf_{u\in\mathbb{Q}^{+}}\left\{ u:Q_{u}^{T}\left[|\mathcal{C}_{\varnothing}|=\infty\right]>0\right\} \mbox{.}\nonumber \end{eqnarray}
It follows directly that $u_{*}$ is $\mathbb{\bar{P}}\mbox{-a.s}$
constant.
\end{proof}

\section{Characterization of $u^{*}$}

In this section we will show that $u^{*}$ is non-trivial and can
be obtained as the root of equation \eqref{eq:b2}. In order to make
our calculation more natural we will work with the modified tree $T'$
obtained by attaching an additional vertex $\Delta$ to the root of
$T$. This change is legitimate only if $T$ and $T'$ have the same
critical parameter $u^{*}$, which is equivalent to \begin{equation}
Q_{u}^{T}\left[\left|\mathcal{C}_{\varnothing}\right|=\infty\right]>0\mbox{ iff }Q_{u}^{T'}\left[\left|\mathcal{C}_{\varnothing}\right|=\infty\right]>0.\label{eq:fifif}\end{equation}
 To prove \eqref{eq:fifif} observe that by Theorem \ref{thm:augusto perc-1}
that \begin{equation}
Q_{u}^{T}\left[\left|\mathcal{C}_{\varnothing}\right|=\infty|\varnothing\in\mathcal{V}\right]=Q_{u}^{T'}\left[\left|\mathcal{C}_{\varnothing}\right|=\infty|\varnothing\in\mathcal{V}\right].\end{equation}
Since $Q_{u}^{T}\left[\varnothing\in\mathcal{V}\right]>0$ and $Q_{u}^{T'}\left[\varnothing\in\mathcal{V}\right]>0$
this is equivalent to\begin{equation}
\frac{Q_{u}^{T}\left[\left|\mathcal{C}_{\varnothing}\right|=\infty\right]}{Q_{u}^{T}\left[\varnothing\in\mathcal{V}\right]}=\frac{Q_{u}^{T'}\left[\left|\mathcal{C}_{\varnothing}\right|=\infty\right]}{Q_{u}^{T'}\left[\varnothing\in\mathcal{V}\right]}\end{equation}
so that \eqref{eq:fifif} holds and $u_{T}^{*}=u_{T'}^{*}$.

If $\left|x\right|\geq1$, $\left(T_{x}\right)'$ is isomorphic to
the tree obtained by attaching $\hat{x}$ to $T_{x}$ . We will thus
identify both trees and write $T_{x}^{'}$ for the tree $T_{x}\cup\left\{ \hat{x}\right\} $.

For every tree $T$ we define the random variable \begin{eqnarray}
\gamma\left(T\right) & = & P_{\varnothing}^{T'}\left[\widetilde{H}_{\Delta}=\infty\right].\label{eq:defgamma}\end{eqnarray}
The random variables $\left(\gamma\left(T_{x}\right)\right)_{\left|x\right|=1}$
are related to the random variable $\chi\left(T\right):=\mbox{cap}_{T'}\left(\varnothing\right)$
by \begin{eqnarray}
\chi\left(T\right) & := & \mbox{cap}_{T}\left(\varnothing\right)\label{eq:relation}\\
 & = & \mbox{cap}_{T'}\left(\varnothing\right)=\sum_{x\in\widetilde{T}:\left|x\right|=1}\gamma\left(T_{x}\right).\end{eqnarray}
The second equality is an easy consequence of definition of the capacity
and the third equality follows, using Markov property, from the following
computation: \begin{eqnarray}
\chi\left(T\right) & = & \left(Z_{\varnothing}+1\right)P_{\varnothing}^{T'}\left[\widetilde{H}_{\varnothing}=\infty\right]\nonumber \\
 & = & \left(Z_{\varnothing}+1\right)\sum_{x\in\widetilde{T}:\left|x\right|=1}P_{x}^{T'}\left[\widetilde{H}_{\varnothing}=\infty\right]P_{\varnothing}^{T'}\left[X_{1}=x\right]\nonumber \\
 & = & \left(Z_{\varnothing}+1\right)\sum_{x\in\widetilde{T}:\left|x\right|=1}\frac{1}{\left(Z_{\varnothing}+1\right)}P_{x}^{T}\left[\widetilde{H}_{\varnothing}=\infty\right]\label{eq:calabelpas}\\
 & = & \sum_{x\in\widetilde{T}:\left|x\right|=1}P_{x}^{T_{x}^{'}}\left[\widetilde{H}_{\varnothing}=\infty\right]=\sum_{x\in\widetilde{T}:\left|x\right|=1}\gamma\left(T_{x}\right).\nonumber \end{eqnarray}
The recursive structure of Galton-Watson tree implies that the random
variables $\left(\gamma\left(T_{x}\right)\right)_{\left|x\right|=1}$
are i.i.d. We can use this property and formula \eqref{eq:ftilde}
to express relation \eqref{eq:relation} in terms of Laplace transforms.
This yields\begin{eqnarray}
\mathcal{L}_{\chi}\left(u\right) & := & \bar{\mathbb{E}}\left[\exp\left(-u\mbox{cap}_{T'}\left(\varnothing\right)\right)\right]\label{eq:laplequa}\\
 & = & \bar{\mathbb{E}}\left[\bar{\mathbb{E}}\bigg[\exp(-u\sum_{x\in\widetilde{T}:\left|x\right|=1}\gamma\left(T_{x}\right))|\widetilde{Z}_{\varnothing}\bigg]\right]\nonumber \\
 & = & \bar{\mathbb{E}}\left[\bar{\mathbb{E}}\bigg[\prod_{x\in\widetilde{T}:\left|x\right|=1}\exp\left(-u\gamma\left(T_{x}\right)\right)|\widetilde{Z}_{\varnothing}\bigg]\right]\nonumber \\
 & = & \bar{\mathbb{E}}\left[\bar{\mathbb{E}}\left[\exp\left(-u\gamma\left(T\right)\right)\right]^{\widetilde{Z}_{\varnothing}}\right]\overset{\eqref{eq:ftilde}}{=}\widetilde{f}\left(\mathcal{L}_{\gamma}\left(u\right)\right).\nonumber \end{eqnarray}
Since $\left(\widetilde{f}^{-1}\right)'=\frac{1}{\widetilde{f}'\circ\widetilde{f}^{-1}}$,
this allows us to write \eqref{eq:b2} as\begin{equation}
\ensuremath{\frac{1}{\widetilde{f}'\circ\widetilde{f}^{-1}\left(\widetilde{f}\left(\mathcal{L}_{\gamma}\left(u\right)\right)\right)}=1}\end{equation}
and thus \begin{equation}
\tag*{(1.5)'}\widetilde{f}'\left(\mathcal{L}_{\gamma}\left(u\right)\right)=1.\label{eq:laprime}\end{equation}
Moreover, $\widetilde{f}$ and $\widetilde{f}'$ being bijective,
the uniqueness of the solution is preserved. Thus from now on we will
consider \eqref{eq:b2} and \ref{eq:laprime} as equivalent.

We will now explicit a relation verified by the annealed probability
that $\mathcal{C}_{\textrm{\ensuremath{\varnothing}}}$ is infinite.
\begin{prop}
\label{pro:u*<infini}For a Galton-Watson process, $r^{u}=\bar{\mathbb{E}}\left[Q_{u}^{T}\left(\left|\mathcal{C}_{\textrm{\ensuremath{\varnothing}}}\right|=\infty\right)\right]$
is the largest root in $\left[0,1\right]$ of the equation\begin{eqnarray}
\mathcal{L}_{\chi}\left(u\right)-r^{u} & = & \widetilde{f}\left(\mathcal{L}_{\gamma}\left(u\right)-r^{u}\right).\label{eq:laplace}\end{eqnarray}
\end{prop}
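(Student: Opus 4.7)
The plan is to compute the annealed quantity
$\mathcal{L}_\chi(u)-r^u=\bar{\mathbb{E}}\bigl[Q_u^T[\varnothing\in\mathcal{V},|\mathcal{C}_\varnothing|<\infty]\bigr]$
as a product over the children of the root lying in $\widetilde T$, and recognize its expectation as $\widetilde f(\mathcal L_\gamma(u)-r^u)$ via the Harris decomposition \eqref{eq:ftilde}.

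\textbf{Quenched factorization.} Apply Theorem~\ref{thm:augusto perc-1} to the cluster $\mathcal C_\varnothing$ on $T$: given $\varnothing\in\mathcal V$, the restrictions of the inhomogeneous Bernoulli field to the disjoint subtrees $T_x$, $|x|=1$, are independent, and children with $T_x$ finite contribute a trivial factor $1$ (since then $Q_u^{T_x}[|\mathcal C_x|=\infty]=0$) and can be dropped. Setting $p_u(x):=Q_u^T[x\in\mathcal V\mid\varnothing\in\mathcal V]$, this yields
\[
Q_u^T[\varnothing\in\mathcal V,|\mathcal C_\varnothing|<\infty]=e^{-u\chi(T)}\prod_{x\in\widetilde T,|x|=1}\bigl(1-p_u(x)\,Q_u^{T_x}[|\mathcal C_x|=\infty\mid x\in\mathcal V]\bigr).
\]
The key computation is a capacity identity
\[
\mathrm{cap}_T(\{\varnothing,x\})=\chi(T)-\gamma(T_x)+\chi(T_x),
\]
obtained by splitting the escape probabilities at $\varnothing$ and at $x$ over their children by the same Markov decomposition as in \eqref{eq:calabelpas}. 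Combined with \eqref{eq:formulecap} this gives $p_u(x)=e^{u\gamma(T_x)-u\chi(T_x)}$; using also $e^{-u\chi(T)}=\prod_{x\in\widetilde T,|x|=1}e^{-u\gamma(T_x)}$ from \eqref{eq:relation}, the whole expression telescopes to
\[
Q_u^T[\varnothing\in\mathcal V,|\mathcal C_\varnothing|<\infty]=\prod_{x\in\widetilde T,|x|=1}\bigl(e^{-u\gamma(T_x)}-Q_u^{T_x}[|\mathcal C_x|=\infty]\bigr).
\]

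\textbf{Averaging and largest root.} Taking $\bar{\mathbb E}$ and conditioning on $\widetilde Z_\varnothing$, the subtrees $(T_x)$ for $x$ a child of $\varnothing$ in $\widetilde T$ are i.i.d.\ copies of $T$ under $\bar{\mathbb P}$, and $\widetilde Z_\varnothing$ has generating function $\widetilde f$; repeating the computation \eqref{eq:laplequa} immediately yields $\mathcal L_\chi(u)-r^u=\widetilde f(\mathcal L_\gamma(u)-r^u)$. To show that $r^u$ is the largest root in $[0,1]$, define $\rho_n:=\bar{\mathbb E}\bigl[Q_u^T[\mathcal C_\varnothing\cap\{z\in\widetilde T:|z|\ge n\}\neq\emptyset]\bigr]$. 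The same factorization truncated at depth gives the recursion $\mathcal L_\chi(u)-\rho_{n+1}=\widetilde f(\mathcal L_\gamma(u)-\rho_n)$, with $\rho_0=\mathcal L_\chi(u)$ and $\rho_n\downarrow r^u$ by monotone convergence. If $y\in[0,1]$ is any root of \eqref{eq:laplace}, then $y\le\mathcal L_\chi(u)=\rho_0$ (since the RHS of the equation is non-negative), and by monotonicity of $\widetilde f$ an induction gives $y\le\rho_n$ for every $n$, hence $y\le r^u$.

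The delicate step is the capacity identity underlying the first part: it is precisely this identity that forces $p_u(x)$ to carry a compensating factor $e^{u\gamma(T_x)}$, so that the global weight $e^{-u\chi(T)}$ distributes evenly across the infinite-descent children and the product collapses into one whose annealed mean is $\widetilde f(\mathcal L_\gamma(u)-r^u)$.
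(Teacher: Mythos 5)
Your proposal is correct but takes a genuinely different route from the paper. The paper first replaces $T$ by the modified tree $T'=T\cup\{\Delta\}$ (justifying $u_T^*=u_{T'}^*$), introduces the cluster-depth variable $\mathcal D_\varnothing$, and derives the recursion by comparing $T'$ with the attached subtrees $T'_x=T_x\cup\{\hat x\}$, for which the vacancy probability of the added ancestor is exactly $e^{-u\gamma(T_x)}$. You instead stay entirely on $T$: you factor $Q_u^T[\varnothing\in\mathcal V,\,|\mathcal C_\varnothing|<\infty]$ over children via Theorem~\ref{thm:augusto perc-1}, and then the key new ingredient is the explicit two-point capacity identity $\mathrm{cap}_T(\{\varnothing,x\})=\chi(T)-\gamma(T_x)+\chi(T_x)$, which makes $p_u(x)=e^{u\gamma(T_x)-u\chi(T_x)}$ and causes the product to telescope to $\prod_{x\in\widetilde T,|x|=1}\bigl(e^{-u\gamma(T_x)}-Q_u^{T_x}[|\mathcal C_x|=\infty]\bigr)$, the same form the paper reaches via $Q_u^{T'_x}$. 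I checked the capacity identity by decomposing both equilibrium weights over the children of $\varnothing$ and of $x$ — it is correct. One advantage of your route is that it proves the proposition literally with $Q_u^T$, whereas the paper's computation runs on $Q_u^{T'}$ and then identifies the two critical thresholds rather than the two values of $r^u$ themselves.

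Two small points you should make explicit. First, in the ``largest root'' step, the inequality $y\le\mathcal L_\chi(u)$ uses $\widetilde f(\mathcal L_\gamma(u)-y)\ge 0$; since $\widetilde f(0)=0$ and $\widetilde f$ is only guaranteed monotone and non-negative for non-negative arguments, you should note that any root of \eqref{eq:laplace} in $[0,1]$ must satisfy $y\le\mathcal L_\gamma(u)$ (otherwise the right-hand side is not even meaningful as a generating-function value), after which non-negativity and monotonicity are legitimate; the same implicit restriction appears in the paper's concavity argument. Second, your truncated recursion $\mathcal L_\chi(u)-\rho_{n+1}=\widetilde f(\mathcal L_\gamma(u)-\rho_n)$ is asserted as ``the same factorization truncated at depth''; it deserves one line verifying that the event $\{\mathcal C_\varnothing\cap\{z\in\widetilde T:|z|\ge n+1\}\neq\emptyset\}$ is indeed the union over children $x\in\widetilde T$ of the corresponding depth-$n$ events inside $T_x$, and that the annealed expectation of the child factor is $\mathcal L_\gamma(u)-\rho_n$ by the Harris decomposition, exactly mirroring the $n\to n+1$ step for $r^u$.
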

\begin{proof}
For every vertex $x\in T$, we introduce the notation \begin{equation}
\mathcal{D}_{x}=\left(\max_{y\in\mathcal{C}_{x}\cap\widetilde{T}_{x}}\left|y\right|\right)-\left|x\right|,\label{eq:reccant}\end{equation}
the relative depth of the cluster containing $x$. Since $\left\{ \mathcal{D}_{\textrm{\ensuremath{\varnothing}}}\geq n+1\right\} \subset\left\{ \varnothing\in\mathcal{V}\right\} $,
for every $n\in\mathbb{N}$, we have \begin{equation}
Q_{u}^{T'}\left[\left\{ \varnothing\in\mathcal{V}\right\} \cap\left\{ \mathcal{D}_{\textrm{\ensuremath{\varnothing}}}<n+1\right\} \right]=Q_{u}^{T'}\left(\varnothing\in\mathcal{V}\right)-Q_{u}^{T'}\left(\mathcal{D}_{\textrm{\ensuremath{\varnothing}}}\geq n+1\right).\label{eq:prob}\end{equation}
 We can also write \begin{align}
Q_{u}^{T'}\left[\left\{ \varnothing\in\mathcal{V}\right\} \cap\left\{ \mathcal{D}_{\textrm{\ensuremath{\varnothing}}}<n+1\right\} \right] & =Q_{u}^{T'}\left[\mathcal{D}_{\textrm{\ensuremath{\varnothing}}}<n+1|\varnothing\in\mathcal{V}\right]Q_{u}^{T'}\left[\varnothing\in\mathcal{V}\right]\nonumber \\
 & =Q_{u}^{T'}\left[\cup_{x\in\widetilde{T}:\left|x\right|=1}\left\{ \mathcal{D}_{\textrm{\ensuremath{x}}}<n\right\} |\varnothing\in\mathcal{V}\right]Q_{u}^{T'}\left[\varnothing\in\mathcal{V}\right].\label{eq:formulemagique}\end{align}
According to Theorem \ref{thm:augusto perc-1}, we know that conditionally
on $\left\{ \varnothing\in\mathcal{V}\right\} $, under $Q_{u}^{T'}$,
$\mathcal{C}_{\varnothing}$ has the same law as a cluster obtained
by Bernoulli site-percolation. Hence on $\left\{ \varnothing\in\mathcal{V}\right\} $,
the random variables $\left(\mathcal{D}_{x}\right)_{x\in\widetilde{T}:\left|x\right|=1}$
are independent under $Q_{u}^{T}$. Moreover for all vertices $x\in\widetilde{T}$
such that $\left|x\right|=1$ and every $z\in T_{x}^{'}\setminus\left\{ \varnothing\right\} $
we have the equality $h_{T}^{x}\left(z\right)=h_{T_{x}^{'}}^{x}\left(z\right)$.
This implies that \begin{eqnarray}
Q_{u}^{T'}\left[\mathcal{D}_{x}\geq n|\varnothing\in\mathcal{V}\right] & = & Q_{u}^{T_{x}^{'}}\left[\mathcal{D}_{x}\geq n|\varnothing\in\mathcal{V}\right].\end{eqnarray}
So that we can rewrite \eqref{eq:formulemagique} as\begin{align}
Q_{u}^{T'}\left[\left\{ \varnothing\in\mathcal{V}\right\} \cap\left\{ \mathcal{D}_{\textrm{\ensuremath{\varnothing}}}<n+1\right\} \right] & =\left(\prod_{x\in\widetilde{T}:\left|x\right|=1}Q_{u}^{T'}\left[\mathcal{D}_{x}<n|\varnothing\in\mathcal{V}\right]\right)Q_{u}^{T'}\left[\varnothing\in\mathcal{V}\right]\nonumber \\
 & =\left(\prod_{x\in\widetilde{T}:\left|x\right|=1}Q_{u}^{T_{x}^{'}}\left[\mathcal{D}_{x}<n|\varnothing\in\mathcal{V}\right]\right)Q_{u}^{T'}\left[\varnothing\in\mathcal{V}\right]\label{eq:premierpas}\\
 & =\left(\prod_{x\in\widetilde{T}:\left|x\right|=1}\left(1-\frac{Q_{u}^{T_{x}^{'}}\left[\mathcal{D}_{x}\geq n\right]}{Q_{u}^{T_{x}^{'}}\left[\varnothing\in\mathcal{V}\right]}\right)\right)Q_{u}^{T'}\left[\varnothing\in\mathcal{V}\right].\nonumber \end{align}
According to \eqref{eq:capequa} and \eqref{eq:blabla}, we have\begin{eqnarray}
Q_{u}^{T_{x}^{'}}\left[\varnothing\in\mathcal{V}\right] & = & \exp\left(-u\,\mbox{deg}_{T_{x}^{'}}\left(\varnothing\right)P_{x}^{T_{x}^{'}}\left[\widetilde{H}_{\varnothing}=\infty\right]\right)\label{eq:djadja}\\
 & = & \exp\left(-u\, P_{x}^{T'}\left[\widetilde{H}_{\varnothing}=\infty\right]\right)\nonumber \\
 & = & \exp\left(-u\,\gamma\left(T_{x}\right)\right).\nonumber \end{eqnarray}
Using \eqref{eq:relation} and \eqref{eq:djadja}, \eqref{eq:premierpas}
can be rewritten as \begin{align}
Q_{u}^{T'} & \left[\left\{ \varnothing\in\mathcal{V}\right\} \cap\left\{ \mathcal{D}_{\textrm{\ensuremath{\varnothing}}}<n+1\right\} \right]\nonumber \\
\,\, & =\left(\prod_{x\in\widetilde{T}:\left|x\right|=1}\left(1-Q_{u}^{T_{x}^{'}}\left[\mathcal{D}_{x}\geq n\right]e^{u\gamma\left(T_{x}\right)}\right)\right)e^{-u\sum_{x\in\widetilde{T}:\left|x\right|=1}\gamma\left(T_{x}\right)}\label{eq:lalere}\\
\,\, & =\prod_{x\in\widetilde{T}:\left|x\right|=1}\left(e^{-u\gamma\left(T_{x}\right)}-Q_{u}^{T_{x}^{'}}\left[\mathcal{D}_{x}\geq n\right]\right).\nonumber \end{align}
We can now finally prove \eqref{eq:laplace}. If we denote $r_{n}^{u}=\bar{\mathbb{E}}\left[Q_{u}^{T'}\left(\mathcal{D}_{\textrm{\ensuremath{\varnothing}}}\geq n\right)\right]$,
we have

\begin{align}
\mathcal{L}_{\chi}\left(u\right)-r_{n+1}^{u}:= & \,\,\bar{\mathbb{E}}\left[e^{-u\mbox{cap}_{T'}\left(\varnothing\right)}-Q_{u}^{T'}\left(\mathcal{D}_{\textrm{\ensuremath{\varnothing}}}\geq n+1\right)\right]\nonumber \\
= & \,\,\bar{\mathbb{E}}\left[Q_{u}^{T'}\left(\varnothing\in\mathcal{V}\right)-Q_{u}^{T'}\left(\mathcal{D}_{\textrm{\ensuremath{\varnothing}}}\geq n+1\right)\right]\nonumber \\
\overset{\eqref{eq:prob}}{\,\,\,\,\,\,\,=} & \,\,\bar{\mathbb{E}}\left[Q_{u}^{T'}\left[\left\{ \varnothing\in\mathcal{V}\right\} \cap\left\{ \mathcal{D}_{\textrm{\ensuremath{\varnothing}}}<n+1\right\} \right]\right]\label{eq:recrecfrero}\\
\overset{\eqref{eq:lalere}}{\,\,\,\,\,\,\,=} & \,\,\bar{\mathbb{E}}\left[\prod_{x\in\widetilde{T}:\left|x\right|=1}\left(\exp\left(-u\gamma\left(T_{x}\right)\right)-Q_{u}^{T_{x}}\left(\mbox{ \ensuremath{\mathcal{D}_{x}}}\geq n\right)\right)\right]\nonumber \\
= & \,\,\widetilde{f}\left(\mathcal{L}_{\gamma}\left(u\right)-r_{n}^{u}\right).\nonumber \end{align}
The sequence $\left(r_{n}^{u}\right)_{n\in\mathbb{N}}$ is decreasing
by definition. Therefore it converges and its limit $r^{u}=\bar{\mathbb{E}}\left[Q_{u}^{T\text{\textasciiacute}}\left[\mathcal{D}_{\textrm{\ensuremath{\varnothing}}}=\infty\right]\right]$
verifies \begin{equation}
\begin{array}{ccc}
\mathcal{L}_{\chi}\left(u\right)-r^{u} & = & \widetilde{f}\left(\mathcal{L}_{\gamma}\left(u\right)-r^{u}\right).\end{array}\label{eq:recurrence pn}\end{equation}
The function $\widetilde{f}$ is strictly convex by \eqref{eq:a0}.
Therefore the function \begin{equation}
x\mapsto\mathcal{L}_{\chi}\left(u\right)-\widetilde{f}\left(\mathcal{L}_{\gamma}\left(u\right)-x\right)\end{equation}
 is strictly concave and the equation \begin{equation}
\mathcal{L}_{\chi}\left(u\right)-x=\widetilde{f}\left(\mathcal{L}_{\gamma}\left(u\right)-x\right).\label{eq:lasteq}\end{equation}
has at most two roots in $[0,1$). According to \eqref{eq:laplequa},
$0$ is always a root. Assume that there exists another root $x_{0}$
in $\left(0,1\right)$. Then using the concavity we have \begin{equation}
x>\mathcal{L}_{\chi}\left(u\right)-\widetilde{f}\left(\mathcal{L}_{\gamma}\left(u\right)-x\right)\label{eq:recrecfrade}\end{equation}
 on $\left(0,x_{0}\right)$. The sequence $\left(r_{n}^{u}\right)_{n\in\mathbb{N}}$
is positive and decreasing and verifies \eqref{eq:recrecfrero}. Thus
\eqref{eq:recrecfrade} and an easy recurrence shows that $\forall n\in\mathbb{N}$,
$r_{n}^{u}\geq x_{0}$. Since $r^{u}$ verifies \eqref{eq:recurrence pn},
$r^{u}$ a root of \eqref{eq:lasteq} and $r^{u}$ can only be $x_{0}$
in this case. 

Finally, the tree $T$ have locally finite degree and thus $\left\{ \mathcal{D}_{\textrm{\ensuremath{\varnothing}}}=\infty\right\} =\left\{ \left|\mathcal{C}_{\textrm{\ensuremath{\varnothing}}}\right|=\infty\right\} $.
This yields \begin{equation}
r^{u}=\bar{\mathbb{E}}\left[Q_{u}^{T'}\left[\left|\mathcal{C}_{\textrm{\ensuremath{\varnothing}}}\right|=\infty\right]\right]\end{equation}
 is the largest root of \eqref{eq:laplace} in $\left(0,1\right)$
which finishes the proof of Proposition \ref{pro:u*<infini}.
\end{proof}
We are now able to deduce non-triviality and \ref{eq:laprime} from
the deterministic study of the roots of the equality \eqref{eq:laplace}.
\begin{proof}[Proof of \ref{eq:laprime}]
According to \eqref{eq:laplace}, we have that \begin{equation}
\begin{array}{ccc}
\mathcal{L}_{\chi}\left(u\right)-r^{u} & = & \widetilde{f}\left(\mathcal{L}_{\gamma}\left(u\right)-r^{u}\right).\end{array}\end{equation}
 Using Taylor-Laplace formula, this can be rewritten:\begin{eqnarray}
0 & = & r^{u}-\mathcal{L}_{\chi}\left(u\right)+\widetilde{f}\left(\mathcal{L}_{\gamma}\left(u\right)-r^{u}\right)\nonumber \\
 & \overset{\eqref{eq:laplequa}}{=} & r^{u}-\left(\widetilde{f}\left(\mathcal{L}_{\gamma}\left(u\right)\right)-\widetilde{f}\left(\mathcal{L}_{\gamma}\left(u\right)-r^{u}\right)\right)\label{eq:4.20}\\
 & = & r^{u}\left(1-\widetilde{f}'\left(\mathcal{L}_{\gamma}\left(u\right)\right)-r^{u}\intop_{0}^{1}\left(1-t\right)\widetilde{f}''\left(\mathcal{L}_{\gamma}\left(u\right)-tr^{u}\right)dt\right).\nonumber \end{eqnarray}
From Proposition \ref{pro:u*<infini} and the definition of $u^{*}$,
we can easily deduce that $u^{*}$ is the infimum over $u$ for which
the function \begin{equation}
g_{u}\left(x\right)=1-\widetilde{f}'\left(\mathcal{L}_{\gamma}\left(u\right)\right)-x\intop_{0}^{1}\left(1-t\right)\widetilde{f}''\left(\mathcal{L}_{\gamma}\left(u\right)-tx\right)dt\end{equation}
has a positive root in $\left(0,1\right)$.

We first show that \begin{equation}
\mbox{\ensuremath{g_{u}}has a root in \ensuremath{\left(0,1\right)}iff \ensuremath{g_{u}\left(0\right)>0}.}\label{eq:rootajeje}\end{equation}
 According to \eqref{eq:ftilde}, the function $\widetilde{f}$ is
strictly convex. Therefore, for every $x\in\left(0,1\right)$ we have
\begin{equation}
x\intop_{0}^{1}\left(1-t\right)\widetilde{f}''\left(\mathcal{L}_{\gamma}\left(u\right)-tx\right)dt>0.\end{equation}
 In particular, we have $g_{u}\left(x\right)<g_{u}\left(0\right)$
for every $x\in\left(0,1\right)$. Moreover by definitions of $\gamma$
and $\chi$ we directly have $0<\chi\left(T\right)<\gamma\left(T\right)$
and thus \begin{equation}
\mathcal{L}_{\gamma}\left(u\right)<\mathcal{L}_{\chi}\left(u\right)<1.\end{equation}
Replacing $r^{u}$ by $\mathcal{L}_{\gamma}\left(u\right)$ in the
first line and the third line of \eqref{eq:4.20}, we obtain\begin{equation}
\mathcal{L}_{\gamma}\left(u\right)-\mathcal{L}_{\chi}\left(u\right)+\widetilde{f}\left(0\right)=\mathcal{L}_{\gamma}\left(u\right)g_{u}\left(\mathcal{L}_{\gamma}\left(u\right)\right).\end{equation}
This yields \begin{eqnarray}
g_{u}\left(\mathcal{L}_{\gamma}\left(u\right)\right) & = & \frac{\mathcal{L}_{\gamma}\left(u\right)-\mathcal{L}_{\chi}\left(u\right)}{\mathcal{L}_{\gamma}\left(u\right)}<0.\end{eqnarray}
Since $g_{u}$ is continuous, this finishes the proof of \eqref{eq:rootajeje}.

To finish the proof of \ref{eq:laprime} and non-triviality of $u^{*}$
we shoud show that \ref{eq:laprime} admits exactly one solution $u_{0}\in\left(0,\infty\right)$
and that\begin{equation}
\mbox{\ensuremath{g_{u}\left(0\right)>0}iff \ensuremath{u>u_{0}}.}\label{eq:rootahuhu}\end{equation}
 We have \begin{equation}
g_{u}\left(0\right)=1-\widetilde{f}'\left(\mathcal{L}_{\gamma}\left(u\right)\right)\label{eq:g0}\end{equation}
 with $\widetilde{f}'\left(\cdot\right)=f'\left(q+\left(1-q\right)\cdot\right)$.
Since $f'$ is continuous, we can write\begin{eqnarray}
\lim_{u\rightarrow\infty}\widetilde{f}'\left(\mathcal{L}_{\gamma}\left(u\right)\right) & = & \widetilde{f}'\left(\lim_{u\rightarrow\infty}\mathcal{L}_{\gamma}\left(u\right)\right)=f'\left(q\right)\\
\lim_{u\rightarrow0}\widetilde{f}'\left(\mathcal{L}_{\gamma}\left(u\right)\right) & = & \widetilde{f}'\left(\lim_{u\rightarrow0}\mathcal{L}_{\gamma}\left(u\right)\right)=f'\left(1\right).\nonumber \end{eqnarray}
Moreover since $f\left(q\right)=q$ and $f\left(1\right)=1$, the
strict convexity of $f$ given by \eqref{eq:a0} implies that $f'\left(q\right)<1$
and $f'\left(1\right)>1$. According to \eqref{eq:g0}, this means
that \begin{equation}
\lim_{u\rightarrow0}g_{u}\left(0\right)<0<\lim_{u\rightarrow\infty}g_{u}\left(0\right).\end{equation}
Finally $f'$ being increasing, $\widetilde{f}'$ is also increasing
and $u\mapsto g_{u}\left(0\right)$ is decreasing. Consequently, there
exist a unique $u_{0}\in\left(0,\infty\right)$ such that $g_{u_{0}}\left(0\right)=0$
(which is equivalent to $u_{0}$ is a solution of \ref{eq:laprime}
and we have \eqref{eq:rootahuhu}. This implies that $u^{*}=u_{0}$
and concludes our proof. 
\end{proof}
\bibliographystyle{amsalpha}
\addcontentsline{toc}{section}{\refname}
\bibliography{Martin}

\end{document}